\documentclass[11pt,reqno]{amsart}
\usepackage{enumerate, latexsym, amsmath, amsfonts, amssymb, amsthm, color}
\def\pmod #1{\ ({\rm{mod}}\ #1)}
\def\Z{\Bbb Z}

\def\Q{\Bbb Q}

\def\R{\Bbb R}

\def\l{\left}
\def\r{\right}
\def\bg{\bigg}
\def\({\bg(}
\def\){\bg)}
\def\t{\text}
\def\f{\frac}

\def\sign{{\rm sign}}

\def\ls{\leqslant}

\def\eq{\equiv}

\def\da{\delta}

\def\adj{{\rm adj}}

\def\rank{{\rm rank}}

\def\u{{\mathbf u}}
\def\v{{\mathbf v}}

\theoremstyle{plain}
\newtheorem{theorem}{Theorem}

\newtheorem{lemma}{Lemma}

\newtheorem{conjecture}{Conjecture}
\theoremstyle{definition}

\theoremstyle{remark}
\newtheorem{remark}{Remark}

\makeatletter
\@namedef{subjclassname@2020}{%
  \textup{2020} Mathematics Subject Classification}
\makeatother

\allowdisplaybreaks

 \vspace{4mm}

\begin{document}
\hbox{Preprint}
\medskip

\title
[On determinants involving $(\frac{j^2-k^2}p)$ and $(\f{jk}p)$]
{On determinants involving $(\frac{j^2-k^2}p)$ and $(\f{jk}p)$}

\author
[Deyi Chen and Zhi-Wei Sun] {Deyi Chen and Zhi-Wei Sun}

\address {(Deyi Chen) 18 Zheda Road, Xihu District, Hangzhou 310013, Zhejing Province, People's Republic of China}
\email{deyi@zju.edu.cn}

\address{(Zhi-Wei Sun, corresponding author) School of Mathematics, Nanjing
University, Nanjing 210093, People's Republic of China}
\email{zwsun@nju.edu.cn}

\keywords{Determinants, Legendre symbols, quadratic residues modulo primes.
\newline \indent 2020 {\it Mathematics Subject Classification}. Primary 11A15, 11C20; Secondary 15A15.
\newline \indent The second author is supported by the Natural Science Foundation of China (grant 12371004).}

\begin{abstract} Let $p$ be an odd prime and let $(\frac{\cdot}p)$ be the Legendre symbol.
In this paper, we study the determinant
$$\det\left[\left(\f{j^2-k^2}p\right)+\left(\f{jk}p\right)w\right]_{\delta\ls j,k\ls (p-1)/2}$$
with $\delta\in\{0,1\}$. For example, we prove that
the determinant does not depend on $w$ if $p\equiv3\pmod4$ and $\delta=0$.
\end{abstract}
\maketitle

\section{Introduction}
\setcounter{lemma}{0}
\setcounter{theorem}{0}
\setcounter{corollary}{0}
\setcounter{remark}{0}
\setcounter{equation}{0}

As usual, for a square matrix $A=[a_{jk}]_{1\ls j,k\ls n}$ over a commutative ring with identity,  we use $|A|$ or $|a_{jk}|_{1\ls j,k\ls n}$
to denote its determinant.

Let $p$ be an odd prime, and let $(\frac{.}{p})$ be the Legendre symbol.
Motivated by the works of R. Chapman's work \cite{C1}
and M. Vsemirnov \cite{V12,V13}, for any integer $d$ Z.-W. Sun \cite{S19} introduced the determinants
$$S(d,p)=\bg|\l(\f{j^2+dk^2}p\r)\bg|_{1\ls j,k\ls (p-1)/2}$$
and $$T(d,p)=\bg|\l(\f{j^2+dk^2}p\r)\bg|_{0\ls j,k\ls (p-1)/2},$$
and showed that
\begin{equation}\label{ST} S(d,p)=\begin{cases}\f2{p-1}T(d,p)&\t{if}\ (\f dp)=1,\\0&\t{if}\ (\f dp)=-1,
\end{cases}\ \t{and}\
\l(\f{T(d,p)}p\r)=\begin{cases}(\f{2}p)&\t{if}\ (\f dp)=1,
\\1&\t{if}\ (\f dp)=-1.\end{cases}
\end{equation}
In 2018 Sun \cite{S18} conjectured that $-S(1,p)$ is an integer square provided $p\eq3\pmod4$,
this was finally confirmed by D. Krachun based on an initial idea of M. Alekseyev via quadratic Gauss sums.

Recently, Sun \cite{S24} studied determinants whose entries are linear combinations of Legendre symbol
and posed many conjectures in this direction. For example, Sun \cite[Remark 1.1]{S24}
conjectured that
\begin{equation}\l|x+\l(\f{j^2+k^2}p\r)+\l(\f{j^2-k^2}p\r)\r|_{1\ls j,k\ls (p-1)/2}
=\l(\f{p-1}2x-1\r)p^{(p-3)/4}
\end{equation}
for any prime $p\eq3\pmod4$, and this was later confirmed by J. Li and H.-L. Wu \cite{LW}
via Jacobi sums over finite fields.

Motivated by the above work, we establish the following new result.

\begin{theorem} \label{conj} Let $p$ be an odd prime. For $\da\in\{0,1\}$, define
$$D_p^{(\da)}(w)=\l|\l(\f{j^2-k^2}p\r)+\l(\f {jk}p\r)w\r|_{\da\ls j,k\ls(p-1)/2}-\l|\l(\f{j^2-k^2}p\r)\r|_{\da\ls j,k\ls(p-1)/2}.$$

{\rm (i)} When $p\eq1\pmod4$,
there is a positive odd integer $c_p$  such that 
\begin{equation}\label{Dp0}
D_p^{(0)}(w)=\f{p-1}2D_p^{(1)}(w)=-w\l(\f{p-1}2c_p\r)^2.
\end{equation}

{\rm (ii)} Assume that $p\eq3\pmod4$. Then we have
\begin{equation} D_p^{(0)}(w)=0.
\end{equation}
 Also,
there is a positive odd integer $c_p$ such that
\begin{equation}\label{Dp1}
D_p^{(1)}(w)=w\(c_p\sum_{k=1}^{(p-1)/2}\(\f{k}p \)\)^2.
\end{equation}
\end{theorem}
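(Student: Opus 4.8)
The plan is to isolate the $w$-dependence as a rank-one perturbation. Since $\l(\f{jk}p\r)=\l(\f jp\r)\l(\f kp\r)$ for $j,k\gs1$ and vanishes when $j=0$ or $k=0$, the $w$-matrix equals $w\,\v_\da\v_\da^{T}$, where $\v_\da$ is the column vector with entries $\l(\f kp\r)$ for $\da\ls k\ls(p-1)/2$ (with a leading $0$ when $\da=0$). Writing $N_\da=\l[\l(\f{j^2-k^2}p\r)\r]_{\da\ls j,k\ls(p-1)/2}$, the matrix determinant lemma gives $\det(N_\da+w\v_\da\v_\da^{T})=\det N_\da+w\,\v_\da^{T}\adj(N_\da)\v_\da$, so
$$D_p^{(\da)}(w)=w\,\v_\da^{T}\adj(N_\da)\,\v_\da .$$
In particular each $D_p^{(\da)}(w)$ is linear in $w$. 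Next I record the symmetry type: since $\l(\f{k^2-j^2}p\r)=\l(\f{-1}p\r)\l(\f{j^2-k^2}p\r)$, the matrix $N_\da$ is symmetric when $p\eq1\pmod4$ and skew-symmetric when $p\eq3\pmod4$. This already settles the case $p\eq3\pmod4,\ \da=0$: here $N_0$ is skew-symmetric of even order $(p+1)/2$, and the adjugate of an even-order skew-symmetric matrix is itself skew-symmetric, whence $\v_0^{T}\adj(N_0)\v_0=0$ and $D_p^{(0)}(w)=0$.

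For the remaining cases I would diagonalize $N_\da$ through a hidden circulant structure. Fixing a primitive root $g$ modulo $p$ and indexing the classes $\{k,p-k\}$ by $a\in\Z/\f{p-1}2$ via $\pm g^a$, one checks $\l(\f{j^2-k^2}p\r)=\l(\f{g^{2(a-b)}-1}p\r)=:f(a-b)$, so in this basis $N_1$ is the circulant generated by $f$, diagonalized by the characters of $\Z/\f{p-1}2$ with eigenvalues $\la_r=\sum_t f(t)\omega^{rt}$, $\omega=e^{2\pi i/((p-1)/2)}$. Two eigenvectors are decisive: the all-ones vector $\mathbf 1$ (mode $r=0$) and, when $p\eq1\pmod4$, the vector $\v_1=((-1)^a)_a$ (mode $r=(p-1)/4$), the latter because $\l(\f kp\r)$ is then constant on each class $\{k,p-k\}$. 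When $p\eq3\pmod4$ the function $f$ is odd, so $\la_0=\sum_t f(t)=0$; thus $N_1$ (of odd order) has $\mathbf 1$ in its kernel, $\adj(N_1)$ is symmetric of rank one with column space $\Q\mathbf 1$, and its diagonal entries equal the principal minors $\t{Pf}(N_{1,\hat i})^2=c_p^2$. Hence $\adj(N_1)=c_p^2\,\mathbf 1\mathbf 1^{T}$ and
$$\v_1^{T}\adj(N_1)\v_1=c_p^2\,(\mathbf 1^{T}\v_1)^2=\l(c_p\sum_{k=1}^{(p-1)/2}\l(\f kp\r)\r)^2,$$
which is (1.4).

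For $p\eq1\pmod4$ I would use that $\v_1$ is an eigenvector, $N_1\v_1=\la_m\v_1$ with $m=(p-1)/4$, so $\adj(N_1)\v_1=\f{\det N_1}{\la_m}\v_1$ and, since $|\v_1|^2=\f{p-1}2$,
$$\v_1^{T}\adj(N_1)\v_1=\f{p-1}2\cdot\f{\det N_1}{\la_m}=\f{p-1}2\,\la_0\l(\prod_{0<r<m}\la_r\r)^2,$$
the last step because the non-real modes pair off as $\la_{-r}=\la_r$. A short Jacobi-sum computation gives $\la_0=\sum_s\l(\f{s-1}p\r)$ over nonzero squares $s$, which equals $-1$; setting $c_p=\l|\prod_{0<r<m}\la_r\r|$, an integer by Galois invariance, yields $D_p^{(1)}(w)=-w\,\f{p-1}2c_p^2$. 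To pass to $\da=0$ I would use the border $N_0=\l(\begin{smallmatrix}0&\mathbf 1^{T}\\ \mathbf 1&N_1\end{smallmatrix}\r)$ with $\v_0=(0,\v_1)^{T}$: Schur's complement turns $D_p^{(0)}(w)=-\mathbf 1^{T}[\adj(A(w))-\adj(N_1)]\mathbf 1$ with $A(w)=N_1+w\v_1\v_1^{T}$, and Sherman--Morrison expands $\adj(A(w))$. Because $\mathbf 1^{T}\v_1=\sum_k\l(\f kp\r)=0$ for $p\eq1\pmod4$, all cross terms drop and one is left with $D_p^{(0)}(w)=-w\,\det N_1\,\la_0^{-1}\la_m^{-1}(\f{p-1}2)^2=-w\l(\f{p-1}2c_p\r)^2$, giving both identities in (1.3).

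The genuinely hard points, which I expect to demand the most care, are arithmetic rather than structural. First, one must prove $c_p\neq0$, i.e. $\det N_1\neq0$ (resp. $\t{Pf}(N_{1,\hat i})\neq0$), so that the eigenvalues $\la_r$ with $r\neq0$ never vanish and $c_p$ is a genuine positive integer; this non-degeneracy is not automatic from the circulant picture. Second, one must establish the oddness of $c_p$. For the latter I would reduce the relevant determinant or Pfaffian modulo $2$ and analyze the $\{0,\pm1\}$ support pattern of $f$, or else compare $c_p^2$ with a determinant whose value is already known, for instance via the evaluations of $S(d,p)$ and $T(d,p)$ recorded in (1.1).
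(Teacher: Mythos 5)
Your opening reduction via the matrix--determinant lemma and your skew-symmetry argument for the case $p\equiv3\pmod 4$, $\delta=0$ coincide with the paper's proof. Your treatment of the remaining cases is genuinely different and in some respects more self-contained: the paper proves the eigenvector relation $A_\delta\mathbf{u}_\delta=J(-1)\mathbf{u}_\delta$ by a direct re-indexing computation and then quotes Jacobsthal's theorem, Wu's theorem \cite{Wu}, and the identity $|A_0|=\frac{p-1}{2}|A_1|$ from (1.1), whereas your circulant diagonalization, your Galois-invariance argument making $\prod_{0<r<m}\lambda_r$ a rational integer (which independently recovers the squareness that the paper imports from Wu), and your bordered/Schur-complement passage from $\delta=1$ to $\delta=0$ avoid all three of those inputs. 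However, there is a genuine gap, which you flag but do not close: the theorem asserts that $c_p$ is a \emph{positive odd} integer, and you prove neither positivity nor oddness. These are not cosmetic refinements. Without non-vanishing, your rank-one picture for $p\equiv3\pmod4$ collapses: if all principal minors of order $\frac{p-3}{2}$ vanished, then $\adj(A_1)=0$ and your computation would only yield $D_p^{(1)}(w)=0$, contradicting (1.4). Likewise, for $p\equiv1\pmod4$ some eigenvalue $\lambda_r$ with $r\neq 0,m$ could a priori vanish, forcing $c_p=0$.

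The single missing idea, which settles both points at once in both parts, is the paper's parity argument (Lemma 2.1 and Lemma 3.2(ii)). Modulo $2$, the entry $\left(\frac{j^2-k^2}{p}\right)$ equals $1$ exactly when $j\neq k$, since for indices in $\{1,\dots,\frac{p-1}{2}\}$ one has $j^2\equiv k^2\pmod p$ if and only if $j=k$; hence the determinant is congruent modulo $2$ to the permanent of the ``all ones minus identity'' matrix, that is, to the derangement number $D_n=\sum_{k=0}^n(-1)^k\,n!/k!\equiv n+1\pmod 2$. Taking $n=\frac{p-1}{2}$ (even when $p\equiv1\pmod 4$) shows $|A_1|$ is odd, and taking $n=\frac{p-3}{2}$ (even when $p\equiv3\pmod 4$) shows $|A_2|$ is odd; in particular both are nonzero, which is exactly the non-degeneracy you need. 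Oddness of $c_p$ then follows from your own factorizations: in part (i), $\det N_1=\lambda_0\lambda_m\prod_{0<r<m}\lambda_r^2=-\lambda_m c_p^2$ with $\lambda_m\in\Z$, so an odd $\det N_1$ forces $c_p$ odd; in part (ii), $c_p^2=|A_2|$ directly. Your closing sentence gestures at ``reducing the determinant or Pfaffian modulo $2$ and analyzing the support pattern of $f$,'' which is indeed this argument, but the identification with derangement numbers and the congruence $D_n\equiv n+1\pmod 2$ constitute the actual content, and they are absent from the proposal. With that lemma supplied, your proof would be complete, and notably would bypass both Wu's theorem and the appeal to (1.1).
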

\begin{remark}  For any prime $p>3$ with $p\eq3\pmod4$, it is known (cf. \cite[Chap. 5, Sec. 4]{BS} or \cite[p.\,238]{IR}) that
$$\sum_{k=1}^{(p-1)/2}\l(\f kp\r)=\l(2-\l(\f 2p\r)\r)h(-p),$$
where $h(-p)$ denotes the class number of the imaginary quadratic field $\Q(\sqrt{-p})$.
\end{remark}

 Our numerical computation indicates that
$$c_5=1,\,c_{13}=3,\,c_{17}=21,\,c_{29}=83,\,c_{37}=9095,\,c_{41}=98835,\,c_{53}=4689023$$
and
$$c_3=1,\,c_{7}=1,\,c_{11}=1,\,c_{19}=17,\,c_{23}=1,\,c_{31}=33,\,c_{43}=67119,\,c_{47}=1870591.$$
Based on this, we formulate the following conjecture.

\begin{conjecture} For any odd prime $p$, the Jacobi symbol $(\f p{c_p})$ is equal to $1$.
\end{conjecture}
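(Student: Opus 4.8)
The plan is to reduce the conjecture to a congruence on $c_p$ modulo $p$ and then to evaluate an explicit \emph{square root} of the governing determinant, rather than the determinant itself. Since the Jacobi symbol is completely multiplicative in its lower argument, $\legendre{p}{c_p}=\prod_{\ell\mid c_p}\legendre{p}{\ell}^{v_\ell(c_p)}$, so the assertion is equivalent to $p\nmid c_p$ together with the statement that the total multiplicity in $c_p$ of the primes $\ell$ with $\legendre{p}{\ell}=-1$ is even. Quadratic reciprocity converts this into a condition on $c_p\bmod p$: when $p\eq1\pmod4$ one has $\legendre{p}{c_p}=\legendre{c_p}{p}$, so it suffices to prove that $c_p$ is a quadratic residue modulo $p$; when $p\eq3\pmod4$ one has $\legendre{p}{c_p}=\legendre{-1}{c_p}\legendre{c_p}{p}$, so one must control both $c_p\bmod p$ and $c_p\bmod4$.

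The crucial structural input is that $c_p$ is, up to the explicit factor recorded in Theorem \ref{conj}, a true square root. Writing $A=\left[\legendre{j^2-k^2}{p}\right]$ and observing that $B=\left[\legendre{jk}{p}\right]=\u\u^{\t T}$ with $\u=\big(\legendre{j}{p}\big)_{1\ls j\ls(p-1)/2}$ is rank one, the determinant in Theorem \ref{conj} is exactly linear in $w$, with $D_p^{(\da)}(w)=w\,\u^{\t T}\adj(A)\,\u$. For $p\eq3\pmod4$ the matrix $A$ is skew-symmetric (because $\legendre{-1}{p}=-1$) of odd order $(p-1)/2$, so $\adj(A)=\boldsymbol\gamma\boldsymbol\gamma^{\t T}$ with $\gamma_i=(-1)^{i+1}\mathrm{Pf}(A_{\hat i})$ the signed sub-Pfaffians; this makes $\u^{\t T}\adj(A)\u=(\u^{\t T}\boldsymbol\gamma)^2$ manifestly a square and identifies $c_p\sum_{k=1}^{(p-1)/2}\legendre{k}{p}$ with $\pm\sum_i\legendre{i}{p}(-1)^{i+1}\mathrm{Pf}(A_{\hat i})$. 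For $p\eq1\pmod4$ the matrix is symmetric, and one instead seeks a square root over $\Z[i]$ (using $\legendre{-1}{p}=1$), which should exhibit $\f{p-1}2c_p$ as the imaginary part of a Gaussian-integer determinant. Making these square-root expressions fully explicit from the proof of Theorem \ref{conj} is the first task.

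The engine for evaluating them modulo $p$ is the multiplicative diagonalization of $A$. After reindexing by the nonzero quadratic residues $r=j^2$, the entries become $\legendre{r-s}{p}$, which are invariant under $r\mapsto tr,\ s\mapsto ts$ for any residue $t$; hence $A$ commutes with the regular representation of the cyclic group of residues and is diagonalized by the even multiplicative characters $\chi$ of $\F_p^\times$, with eigenvalues that are Jacobi sums $J\l(\legendre{\cdot}{p},\chi\r)$. Thus $\det A$ is a product of Jacobi sums and the Pfaffian is the corresponding half-product, a distinguished element of $\Z[\zeta_{p-1}]$. The plan is to reduce this half-product modulo $p$ using $\legendre{a}{p}\eq a^{(p-1)/2}\pmod p$ together with the congruences for Gauss and Jacobi sums, read off $\legendre{c_p}{p}$, and feed the result into the reciprocity dictionary of the first paragraph to conclude $\legendre{p}{c_p}=1$.

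The main obstacle is precisely this last evaluation: one must extract the Legendre symbol of the Pfaffian (a half-product of Jacobi-sum eigenvalues, carrying an intrinsic sign ambiguity) modulo $p$, whereas the full determinant is a perfect square and conveys no information at all. Controlling the sign of the Pfaffian, pinning down the class-number factor $h(-p)$ that appears for $p\eq3\pmod4$ (and hence the parity $c_p\bmod4$ it governs), and carrying out the analogous Gaussian-integer computation in the symmetric case $p\eq1\pmod4$ are the substantive difficulties. I expect a complete resolution to require Stickelberger-type information on the prime factorization of the Jacobi-sum eigenvalues, combined with the Gross--Koblitz or Chowla--Mordell evaluation of the relevant sums.
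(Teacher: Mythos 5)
First, a point of order: the statement you set out to prove is presented in the paper as an open \emph{conjecture}, supported only by the numerical values $c_5,\ldots,c_{53}$ and $c_3,\ldots,c_{47}$; the paper contains no proof of it, so there is nothing to compare your argument against except the paper's Theorem \ref{conj}, whose machinery you have correctly absorbed. Your preliminary reductions are sound: the reciprocity dictionary ($\legendre{p}{c_p}=\legendre{c_p}{p}$ for $p\eq1\pmod4$, and $\legendre{p}{c_p}=\legendre{-1}{c_p}\legendre{c_p}{p}$ for $p\eq3\pmod4$, valid since $c_p$ is odd) is right; the rank-one identity $D_p^{(\da)}(w)=w\,\u_\da^T\adj(A_\da)\u_\da$ is exactly the paper's use of Lemma \ref{le6}; the factorization $\adj(A)=\boldsymbol\gamma\boldsymbol\gamma^T$ via signed sub-Pfaffians for an odd-order skew-symmetric matrix is a classical identity consistent with the paper's $\adj(A_1)=|A_2|\mathbf{1}$; and the reindexing by quadratic residues does make $A_1$ a group circulant for the cyclic group of residues (since $\legendre{r-s}{p}=\legendre{rs^{-1}-1}{p}$ for residues $r,s$), with Jacobi-sum eigenvalues. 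But your proposal stalls precisely where the conjecture lives, and you say so yourself. The circulant diagonalization, pushed through, yields $c_p^2=|A_2|=\f{2}{p-1}\prod_{\chi\neq\chi_0}\lambda_\chi$ (the trivial-character eigenvalue vanishes, matching $|A_1|=0$), and reducing this product modulo $p$ can only ever give $\legendre{c_p^2}{p}=1$, which is vacuous; everything hinges on evaluating the \emph{half}-product with its sign, and no mechanism for that is supplied beyond the hope that Stickelberger or Gross--Koblitz will do it. That is a statement of the difficulty, not a resolution of it.

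Three further concrete gaps remain even if the Pfaffian were evaluated. First, the conjecture implicitly asserts $p\nmid c_p$: for $p\eq1\pmod4$ this follows from \eqref{ST} together with $J(-1)^2<p$, but for $p\eq3\pmod4$ nothing in the paper or in your plan shows $|A_2|\not\eq0\pmod p$ (the paper only proves it is odd), so even the nonvanishing of $\legendre{p}{c_p}$ is open in that case. Second, for $p\eq3\pmod4$ your reciprocity dictionary requires $c_p\bmod 4$, and your plan never computes it; the class-number identity in the paper's Remark controls $\sum_{k=1}^{(p-1)/2}\legendre{k}{p}$, which is the \emph{other} factor in \eqref{Dp1}, not $c_p$, so $h(-p)$ gives you no purchase on $c_p\bmod4$. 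Third, for $p\eq1\pmod4$ the paper (via Wu's theorem) only gives the \emph{existence} of the square $c_p^2=-|A_1|/J(-1)$; your proposed Gaussian-integer square root whose reduction mod $p$ could be read off is not constructed, and Wu's proof does not hand you one in a form amenable to mod-$p$ evaluation. In short: the framework is correctly assembled and is the natural one, but the proposal proves nothing beyond what Theorem \ref{conj} already contains, and the statement remains a conjecture.
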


We are going to prove parts (i) and (ii) of Theorem \ref{conj} in Sections 2 and 3 respectively.

Throughout this paper, for a matrix $A$ we use $A^T$ to denote the transpose of $A$.
For convenience, when a prime $p>3$ is fixed, we use the notations
\begin{equation}A_{\da}=\l[\l(\f{j^2-k^2}p\r)\r]_{\da\ls j,k\ls(p-1)/2}\ \ (\da=0,1,2)
\end{equation}
and
\begin{equation}\mathbf{u}_\da=\l[\(\f {\da}p\),\(\f {\da+1}p\),\cdots,\(\f {(p-1)/2}p\)\r]^T\ \ (\da=0,1).
\end{equation}

\section{Proof of the First Part of Theorem \ref{conj}}
\setcounter{lemma}{0}
\setcounter{theorem}{0}
\setcounter{corollary}{0}
\setcounter{remark}{0}
\setcounter{equation}{0}

A well known theorem of Jacobsthal (cf. \cite[Theorem 6.2.9]{BEW}) states that for any prime $p\eq1\pmod4$ we have $p=J(s)^2+J(t)^2$ for any $s,t\in\Z$ with $(\f{st}p)=-1$, where
$$J(k)=\sum_{x=1}^{(p-1)/2}\l(\f {x(x^2+k)}p\r)=\f12\sum_{x=0}^{p-1}\l(\f{x(x^2+k)}p\r)\ \ \t{for}\  k\in\Z.$$

\begin{lemma} \label{Lem-W}
Let $p$ be a prime with $p\eq1\pmod4$. Then $-S(-1,p)/J(-1)$ is an odd square.
\end{lemma}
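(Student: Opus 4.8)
The plan is to diagonalize the matrix $A_1=\l[\l(\f{j^2-k^2}p\r)\r]_{1\ls j,k\ls n}$, where $n=(p-1)/2$ and $S(-1,p)=\det A_1$, by exploiting the multiplicative structure of the quadratic residues. Write $\chi=(\f\cdot p)$ and fix a primitive root $g$ modulo $p$. The map $j\mapsto j^2$ sends $\{1,\dots,n\}$ bijectively onto the residues $\{g^{2t}:0\ls t\ls n-1\}$, and performing the induced simultaneous permutation of rows and columns turns $A_1$ into $\l[\chi(g^{2i}-g^{2i'})\r]_{0\ls i,i'\ls n-1}$. Since $\chi(g^{2i}-g^{2i'})=\chi(g^{2i'})\chi(g^{2(i-i')}-1)=\chi(g^{2(i-i')}-1)$, this is a circulant with symbol $f(t)=\chi(g^{2t}-1)$, so $\det A_1=\prod_{m=0}^{n-1}\lambda_m$ with $\lambda_m=\sum_{t=0}^{n-1}f(t)\zeta^{mt}$ and $\zeta=e^{2\pi i/n}$.

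First I would record the structure of the $\lambda_m$. Because $\chi(-1)=1$ and residues have trivial symbol, $f(-t)=f(t)$; thus $f$ is real and even, every $\lambda_m$ is real, and $\lambda_m=\lambda_{n-m}$. As $p\eq1\pmod4$, the integer $n$ is even, so the only unpaired eigenvalues are $\lambda_0$ and $\lambda_{n/2}$. Standard character-sum evaluations give $\lambda_0=\sum_{t}\chi(g^{2t}-1)=-1$. For $\lambda_{n/2}$, note $\zeta^{n/2}=-1$, so $\lambda_{n/2}=\sum_t(-1)^t\chi(g^{2t}-1)$; here $g^{2t}\mapsto(-1)^t$ is the restriction to the residues of a quartic character $\chi_4$ (with $\chi_4^2=\chi$), and substituting $s=g^{2t}=x^2$ and using $\chi_4(x^2)=\chi(x)$ yields
$$\lambda_{n/2}=\sum_{x=1}^{n}\chi(x)\chi(x^2-1)=J(-1).$$
Collecting the equal pairs, I obtain $\det A_1=\lambda_0\lambda_{n/2}\prod_{m=1}^{n/2-1}\lambda_m^2=-J(-1)\,P^2$ with $P=\prod_{m=1}^{n/2-1}\lambda_m$.

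The next step is to see that $P$ is a rational integer, so that $-S(-1,p)/J(-1)=P^2$ is genuinely a perfect square. Each $\lambda_m$ is an algebraic integer of $\Q(\zeta)$, and $\mathrm{Gal}(\Q(\zeta)/\Q)$ acts by $\sigma_a\colon\lambda_m\mapsto\lambda_{am\bmod n}$ for $a\in(\Z/n)^*$. Every such $a$ is odd, so it fixes the class of $n/2$ and permutes the remaining nonzero classes $\{\pm m\}$ among themselves; since $\lambda_m=\lambda_{-m}$, the product $P$ over $m=1,\dots,n/2-1$ is Galois-invariant, hence rational, hence (being an algebraic integer) lies in $\Z$. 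That $J(-1)\neq0$, so the quotient is defined, follows from Jacobsthal's theorem: with $s=-1$ and a nonresidue $t$ one has $(\f{st}p)=-1$ and $p=J(-1)^2+J(t)^2$, impossible if $J(-1)=0$.

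For the oddness of $P$ I would realize it as an honest integer determinant and reduce modulo $2$. Since the circulant commutes with the flip $R\colon e_i\mapsto e_{-i}$, it preserves the antisymmetric sublattice $\{x\in\Z^{n}:x_i=-x_{n-i}\}$, which has the integral basis $e_i-e_{n-i}$ $(1\ls i\ls n/2-1)$, and the eigenvalues of the circulant on the antisymmetric subspace are exactly $\lambda_1,\dots,\lambda_{n/2-1}$. Hence in this basis the circulant is represented by the integer matrix $M=\l[f(l-i)-f(l+i)\r]_{1\ls l,i\ls n/2-1}$ with $\det M=P$. Finally, as $f(0)=0$ and $f(t)\eq1\pmod2$ whenever $t\not\eq0\pmod n$, one checks $M\eq I\pmod2$ (off-diagonal entries are $\eq1+1\eq0$, diagonal entries $\eq0+1\eq1$), so $\det M=P$ is odd. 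I expect the careful $\Z$-rational block-diagonalization onto the antisymmetric sublattice—rather than this final parity check—to be the main technical point, since it is what converts the abstract eigenvalue product $P$ into a concrete integer matrix amenable to reduction modulo $2$.
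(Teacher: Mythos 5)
Your proof is correct, and it takes a genuinely different route from the paper's. The paper handles the square part purely by citation: writing $p=a^2+b^2$ with $a\eq1\pmod4$, it quotes Jacobsthal's evaluation $J(-1)=-(-1)^{(p-1)/4}a$ and Wu's theorem that $(-1)^{(p-1)/4}S(-1,p)/a$ is an integer square, so the quotient $-S(-1,p)/J(-1)$ is a square; oddness is then obtained by noting that the expansion of $S(-1,p)$ over permutations has the same parity as the derangement number $D_{(p-1)/2}$, which is odd since $(p-1)/2$ is even. You instead give a self-contained spectral argument: conjugating $A_1$ by the permutation induced by $j\mapsto j^2$ exhibits it as a circulant with symbol $f(t)=\l(\f{g^{2t}-1}p\r)$, whose eigenvalues pair up as $\lambda_m=\lambda_{n-m}$ except for $\lambda_0=-1$ and $\lambda_{n/2}=J(-1)$, yielding $S(-1,p)=-J(-1)P^2$ with $P=\prod_{m=1}^{n/2-1}\lambda_m$; restricting the circulant to the antisymmetric sublattice with basis $e_i-e_{n-i}$ realizes $P$ as the determinant of the explicit integer matrix $M=[f(l-i)-f(l+i)]_{1\ls l,i\ls n/2-1}$, and $M\eq I\pmod2$ gives the oddness. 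All the steps check out (including the identification $\lambda_{n/2}=J(-1)$ via the quartic character, the invariance of the antisymmetric lattice since $f$ is even, and the nondegeneracy $J(-1)\neq0$ from Jacobsthal's theorem); note only that your Galois-invariance argument for $P\in\Z$ is redundant once $P=\det M$ is established, and that the edge case $p=5$ corresponds to an empty matrix $M$ with $P=1$. As for what each approach buys: the paper's proof is a few lines but leans on Wu's nontrivial theorem as a black box, whereas yours is longer but essentially reproves Wu's result for $d=-1$, needs only $J(-1)\neq0$ rather than its exact value, replaces the derangement-parity trick by a transparent congruence $M\eq I\pmod 2$, and moreover produces an explicit integer formula $c_p=|\det M|$ for the square root, which the citation-based proof does not provide.
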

\begin{proof} Write $p=a^2+b^2$ with $a,b\in\Z$ and $a\eq1\pmod4$.
Then $J(-1)=-(-1)^{(p-1)/4}a$ by \cite[Theorem 6.2.9]{BEW}. Also,
$(-1)^{(p-1)/4}S(-1,p)/a$ is an integer square by \cite[Theorem 3]{Wu}.
Thus $-S(-1,p)/J(-1)$ is an integer square.

Now it suffices to to show that $S(-1,p)\eq1\pmod2$. Clearly,
$$S(-1,p)=\sum_{\sigma\in S_{(p-1)/2}}\sign(\sigma)\prod_{j=1}^{(p-1)/2}\l(\f{j^2-\sigma(j)^2}p\r)$$
has the same parity with the derangement number
$$D_{(p-1)/2}=\l|\l\{\sigma\in S_{(p-1)/2}:\ \sigma(j)\not=j\ \t{for all}\ j=1,\ldots,\f{p-1}2\r\}\r|.$$
It is well known (cf. \cite[p.\,67]{St}) that for any positive integer $n$ we have
\begin{equation}\label{Dn} D_n=\sum_{k=0}^n(-1)^k\f{n!}{k!}\eq(-1)^n+(-1)^{n-1}n\eq n+1\pmod2.
\end{equation}
So
$$S(-1,p)\eq D_{(p-1)/2}\eq1\pmod 2.$$
This concludes our proof.
\end{proof}

The following useful lemma can be found in \cite{Vr}.

\begin{lemma}[The Matrix-Determinant Lemma]\label{le6}
Let $A$ be an $n\times n$ matrix over a field $F$, and let $\mathbf{u},\mathbf{v}$ be two $n$-dimensional column vectors with entries in $F$. Then
\begin{equation*}
\l| A+\u\v^T\r|=\l| A\r|+ \v^T \adj(A)\u,
\end{equation*}
where $\adj(A)$ is the adjugate matrix of $A$.	
\end{lemma}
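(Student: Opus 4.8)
The plan is to prove this purely from the multilinearity and alternating properties of the determinant, which keeps the argument valid over any commutative ring with identity (and in particular over the field $F$), with no invertibility hypothesis on $A$. First I would write $\mathbf{a}_1,\cs,\mathbf{a}_n$ for the columns of $A$ and $v_1,\cs,v_n$ for the entries of $\v$, and observe that the $k$-th column of the rank-one matrix $\u\v^T$ is $v_k\u$, so that the $k$-th column of $A+\u\v^T$ is exactly $\mathbf{a}_k+v_k\u$. Next I would expand $\l|A+\u\v^T\r|$ by multilinearity in each of the $n$ columns, producing $2^n$ determinants in which the $k$-th column is either $\mathbf{a}_k$ or $v_k\u$. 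Any term that selects $\u$ in two or more columns has two proportional columns and therefore vanishes, so only $n+1$ terms survive.

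Then I would identify those surviving terms. The term selecting $\mathbf{a}_k$ in every column is precisely $\l|A\r|$. For each $k$, the one remaining term selects $v_k\u$ in column $k$ and $\mathbf{a}_i$ elsewhere, contributing $v_k\,\Da_k$, where $\Da_k$ is the determinant of $A$ with its $k$-th column replaced by $\u$. Expanding $\Da_k$ along that $k$-th column gives $\Da_k=\sum_{i=1}^n u_iC_{ik}$, where $C_{ik}=(-1)^{i+k}M_{ik}$ is the $(i,k)$ cofactor of $A$. Since the adjugate is the transpose of the cofactor matrix, $(\adj A)_{ki}=C_{ik}$, so $\Da_k=\sum_{i=1}^n(\adj A)_{ki}u_i=(\adj(A)\,\u)_k$. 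Summing the surviving contributions then yields
$$\l|A+\u\v^T\r|=\l|A\r|+\sum_{k=1}^n v_k(\adj(A)\,\u)_k=\l|A\r|+\v^T\adj(A)\,\u,$$
which is the claimed identity.

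The only genuinely delicate point is the indexing convention for the adjugate: it is the transpose relation $(\adj A)_{ki}=C_{ik}$ that turns the column-expansion coefficients $C_{ik}$ into the matrix product $\v^T\adj(A)\,\u$ rather than $\v^T(\adj A)^T\u$, so I would state this carefully. As an alternative route one could evaluate the bordered matrix $\l[\begin{smallmatrix} A & \u \\ -\v^T & 1\end{smallmatrix}\r]$ by two Schur-complement expansions, obtaining $\l|A+\u\v^T\r|$ on one side and $\l|A\r|\l(1+\v^T A^{-1}\u\r)$ on the other; this gives the result at once when $A$ is invertible, after which the general case follows by a polynomial-identity (Zariski density) argument in the entries. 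I would nonetheless favour the multilinearity proof, since it is uniform across all base rings and avoids the density step entirely.
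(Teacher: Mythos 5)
Your proof is correct. Note that the paper itself offers no proof of this lemma at all: it simply cites Vrabel \cite{Vr} and moves on, so there is no argument in the paper to compare yours against step by step. Your multilinearity expansion is the standard clean route: writing the $k$-th column of $A+\u\v^T$ as $\mathbf{a}_k+v_k\u$, expanding into $2^n$ terms, killing every term containing two $\u$-columns (two columns proportional to $\u$ force a vanishing determinant, even when some $v_k=0$ or the base ring has zero divisors, since one factors out the scalars and is left with two equal columns), and identifying the surviving $\Da_k$ via cofactor expansion with $(\adj A)_{ki}=C_{ik}$ --- a convention point you are right to flag, as it is exactly what produces $\v^T\adj(A)\u$ rather than its transpose. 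Your proof has a concrete advantage over the Schur-complement/density alternative you mention, and it matters for this paper: in Section 3 the lemma is applied to $A_1$ with $|A_1|=0$, so the adjugate form for \emph{singular} $A$ is genuinely needed, and your argument covers that case directly (indeed over any commutative ring with identity, strengthening the field hypothesis in the statement), whereas the bordered-matrix route would require the extra polynomial-identity step to reach singular $A$.
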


{\noindent{\tt Proof of Theorem \ref{conj}(i)}. Suppose that $p\eq1\pmod4$. Let $\da\in\{0,1\}$.
Note that $A_{\da}$ is invertible since
\begin{equation}\label{A01}|A_0|=\f{p-1}2|A_1|\not\eq0\pmod p
\end{equation}
by \eqref{ST}.
In view of Lemma \ref{le6}, we have
\begin{equation}\label{Dp}D_p^{(\da)}(w)=\l|A_\da+w\u_\da\u_\da^T \r|-\l|A_\da \r|=w\u_\da^T\adj(A_\da)\u_\da=w\u_\da^T|A_{\da}|A_{\da}^{-1}\u_\da.
\end{equation}

Let $j\in\{1,\ldots,(p-1)/2\}$. For $k=1,\ldots,(p-1)/2$ let $r_j(k)$ be the unique integer $r\in\{1,\ldots,(p-1)/2\}$ with $jk$ congruent to $r$ or $-r$ modulo $p$.
Then
\begin{align*}
&\ \sum_{k=\da}^{(p-1)/2}\(\f {j^2-k^2}p\)\(\f {k}p\)
\\=&\ \sum_{k=1}^{(p-1)/2}\(\f {j^2-r_j(k)^2}p\)\(\f {r_j(k)}p\)
=\ \sum_{k=1}^{(p-1)/2}\(\f {j^2(1-k^2)}p\)\(\f {jk}p\)\\
=&\ \(\f {j}p\)\sum_{k=1}^{(p-1)/2}\(\f {k(k^2-1)}p\)=J(-1)\(\f {j}p\).
\end{align*}
Note also that
$$\sum_{k=0}^{(p-1)/2}\l(\f{0^2-k^2}p\r)\l(\f kp\r)=\f12\sum_{k=1}^{p-1}\l(\f kp\r)=0=J(-1)\l(\f 0p\r).$$
Therefore
$$A_\da\mathbf{u}_\da=J(-1)\mathbf{u}_\da$$
and hence
\begin{equation} \label{A-1}A_{\da}^{-1}\mathbf{u}_\da=J(-1)^{-1}\mathbf{u}_\da.
\end{equation}

Combining \eqref{Dp} and \eqref{A-1}, we obtain
$$D_p^{(\da)}(w)=w\u_\da^T|A_{\da}|J(-1)^{-1}\mathbf{u}_\da=\f{|A_{\da}|}{J(-1)}\times\f{p-1}2w.$$
Thus, with the aid of \eqref{A01}, we see that
$$D_p^{(0)}(w)=\f{p-1}2D_p^{(1)}(w)=-w\l(\f{p-1}2 c_p\r)^2,$$
where $c_p=\sqrt{-|A_1|/J(-1)}$ is a positive odd integer by Lemma \ref{Lem-W}.
This concludes the proof.
\qed

\section{Proof of the Second Part of Theorem \ref{conj}}
\setcounter{lemma}{0}
\setcounter{theorem}{0}
\setcounter{corollary}{0}
\setcounter{remark}{0}
\setcounter{equation}{0}

Let $p$ be an odd prime. By \cite[Theroem 2.1.2]{BEW}, for any $b,c\in\Z$ we have
\begin{equation}\label{le3.1}
\sum_{x=0}^{p-1}\(\f{x^2+bx+c}p \) =
\begin{cases}
-1 & \t{\rm if } p\nmid b^2-4c, \\
p-1 & \t{\rm if } p\mid b^2-4c.
\end{cases}
\end{equation}

\begin{lemma}\label{le3.2}
Let $p$ be a prime with $p\eq3\pmod4$.

{\rm (i)} For each $j\in\{1,\ldots,(p-1)/2\}$, we have
$$\sum_{k=1}^{(p-1)/2}\l(\f{j^2-k^2}p \r) =0.$$

{\rm (ii)} If $p>3$, then
$$\l|\l(\f{j^2-k^2}p\r)\r|_{2\ls j,k\ls(p-1)/2}\eq 1 \pmod2.$$
\end{lemma}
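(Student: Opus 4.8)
The plan is to treat the two parts separately, both of them exploiting that $\l(\f{-1}p\r)=-1$ when $p\eq3\pmod4$.

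For part (i), I would first pass to the complete sum $\sum_{k=0}^{p-1}\l(\f{j^2-k^2}p\r)$ and evaluate it via \eqref{le3.1}. Writing $j^2-k^2=-(k^2-j^2)$ and pulling out $\l(\f{-1}p\r)=-1$ turns this into $-\sum_{k=0}^{p-1}\l(\f{k^2-j^2}p\r)$; since the quadratic $k^2-j^2$ has discriminant $4j^2\not\eq0\pmod p$ (because $1\ls j\ls(p-1)/2$ forces $p\nmid j$), formula \eqref{le3.1} gives $\sum_{k=0}^{p-1}\l(\f{k^2-j^2}p\r)=-1$, hence $\sum_{k=0}^{p-1}\l(\f{j^2-k^2}p\r)=1$. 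Next I would note that the summand is invariant under $k\mapsto p-k$, so the range $k=1,\ldots,p-1$ contributes exactly twice the desired sum, while the single term $k=0$ contributes $\l(\f{j^2}p\r)=1$. This yields $1=1+2\sum_{k=1}^{(p-1)/2}\l(\f{j^2-k^2}p\r)$, which forces the sum to vanish.

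For part (ii), the idea is to reduce the determinant modulo $2$, where every Legendre-symbol entry collapses to $0$ or $1$. For $2\ls j,k\ls(p-1)/2$ the entry $\l(\f{j^2-k^2}p\r)$ vanishes exactly when $p\mid(j-k)(j+k)$; since $|j-k|<p$ and $0<j+k<p$ in this range, this happens if and only if $j=k$. Hence modulo $2$ the matrix equals $J-I$, where $J$ is the all-ones matrix and $I$ the identity, both of order $n=(p-3)/2$. I would then compute $\det(J-I)$ modulo $2$: reading the determinant as a permanent modulo $2$ identifies it with the number of fixed-point-free permutations of $n$ symbols, i.e. the derangement number $D_n$, and \eqref{Dn} gives $D_n\eq n+1\pmod2$. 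Since $p\eq3\pmod4$, the order $n=(p-3)/2$ is even, so $\det(J-I)\eq n+1\eq1\pmod2$, which is the claim. (Equivalently, one may diagonalize $J-I$, whose eigenvalues are $n-1$ once and $-1$ with multiplicity $n-1$, giving determinant $(n-1)(-1)^{n-1}\eq1\pmod2$.)

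Neither step is genuinely hard; the one place demanding care is the reduction in part (ii), namely verifying that off the diagonal no entry is divisible by $p$ (so that it reduces to $1$ rather than $0$) and that the diagonal is identically zero. Pinning down the parity of $n=(p-3)/2$, which is precisely where the hypothesis $p\eq3\pmod4$ enters, is the crux of the argument.
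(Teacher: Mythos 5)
Your proof is correct and takes essentially the same route as the paper: for part (i) you combine the complete character-sum formula \eqref{le3.1} with $(\frac{-1}p)=-1$ and the symmetry $k\mapsto p-k$, and for part (ii) you reduce the determinant modulo $2$ to the derangement count $D_{(p-3)/2}\equiv\frac{p-3}2+1\equiv1\pmod 2$ via \eqref{Dn}, exactly as the paper does. The only cosmetic difference is that you first reduce the matrix to $J-I$ over $\F_2$ (and offer an eigenvalue alternative) before counting fixed-point-free permutations, whereas the paper expands the determinant over permutations directly.
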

\begin{proof} (i) Let $j\in\{1,\ldots,(p-1)/2\}$. In view of \eqref{le3.1}, we have
$$\sum_{k=1}^{\frac{p-1}{2}}\(\f{j^2-k^2}p \) =\frac{1}{2}\sum_{k=1}^{p-1}\(\f{j^2-k^2}p \)=-\frac{1}{2}\(\sum_{k=0}^{p-1}\(\f{k^2-j^2}p \)+1\)=0.$$

(ii) Suppose $p>3$. Observe that
\begin{align*}\l|\l(\f{j^2-k^2}p\r)\r|_{2\ls j,k\ls(p-1)/2}
&=\sum_{\sigma\in S_{(p-3)/2}}\sign(\sigma)\prod_{j=1}^{(p-3)/2}\l(\f{(j+1)^2-(\sigma(j)+1)^2}p\r)
\\&\eq\l|\l\{\sigma\in S_{(p-3)/2}:\ \sigma(j)\not=j\ \t{for all}\ j=1,\ldots,\f{p-3}2\r\}\r|
\\&=D_{(p-3)/2}\eq \f{p-3}2+1\eq1\pmod 2
\end{align*}
with the aid of \eqref{Dn}. This ends the proof. \end{proof}

An $n\times n$ matrix $A$ over a commutative ring with identity is said to be skew-symmetric if $A^T=-A$.
The following classical result can be found in \cite[Prop.\,2.2]{K04}.

\begin{lemma}[Cayley's theorem] \label{le3.3}
If $A$ is a skew-symmetric matrix over $\Z$ and it is of even order, then $|A|$ is an integer square.
\end{lemma}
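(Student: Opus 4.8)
The plan is to prove Cayley's theorem through the Pfaffian, which simultaneously supplies an explicit integer square root of the determinant. Write $n=2m$ and let $A=[a_{jk}]$ be skew-symmetric with $a_{jk}\in\Z$. First I would attach to $A$ the alternating $2$-form $\omega_A=\sum_{1\ls j<k\ls n}a_{jk}\,e_j\wedge e_k$ in the exterior algebra of $\Z^n$ (with standard basis $e_1,\ldots,e_n$), and define $\mathrm{Pf}(A)$ by $\omega_A^{\wedge m}=m!\,\mathrm{Pf}(A)\,e_1\wedge\cdots\wedge e_n$. Expanding the $m$-th wedge power, the only surviving monomials correspond to perfect matchings $\{j_1,k_1\},\ldots,\{j_m,k_m\}$ of $\{1,\ldots,n\}$, each contributing $m!$ identical copies with a common sign; hence $\mathrm{Pf}(A)=\sum_{\text{matchings}}\pm\prod_s a_{j_sk_s}$ is a polynomial in the $a_{jk}$ with coefficients in $\{\pm1\}$. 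In particular $\mathrm{Pf}(A)\in\Z$ whenever $A$ is an integer matrix.

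The heart of the argument is the transformation law $\mathrm{Pf}(B^TAB)=|B|\,\mathrm{Pf}(A)$ for every $n\times n$ matrix $B$. I would derive it functorially: if $\psi$ denotes the linear map with matrix $B^T$, then a short computation starting from $(B^TAB)_{jk}=\sum_{s,t}b_{sj}a_{st}b_{tk}$ shows $\omega_{B^TAB}=\psi_*(\omega_A)$, where $\psi_*$ is the map induced on $\bigwedge^2\Z^n$. Taking $m$-th wedge powers and using that $\psi_*$ acts on the top exterior power $\bigwedge^n\Z^n$ as multiplication by $|B^T|=|B|$ yields the claim.

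It then remains to establish the polynomial identity $|A|=\mathrm{Pf}(A)^2$; granting it, $|A|=\mathrm{Pf}(A)^2$ is visibly an integer square and we are done. Since both sides are polynomials in the entries $a_{jk}$ ($j<k$) with integer coefficients, it suffices to verify the identity over $\C$, and by Zariski density only on the locus of nonsingular skew-symmetric matrices, which is nonempty (hence dense) because $|\,\cdot\,|$ does not vanish identically there. For such $A$ the theory of alternating forms gives $B\in\mathrm{GL}_n(\C)$ with $A=B^TJB$, where $J$ is the block-diagonal standard symplectic matrix. A direct check gives $|J|=1$ and $\mathrm{Pf}(J)=1$, so the transformation law and multiplicativity of the determinant give $\mathrm{Pf}(A)=|B|\,\mathrm{Pf}(J)=|B|$ and $|A|=|B|^2|J|=|B|^2=\mathrm{Pf}(A)^2$.

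I expect the main obstacle to be a clean, honest proof of the transformation law $\mathrm{Pf}(B^TAB)=|B|\,\mathrm{Pf}(A)$: one must check that the induced map on $\bigwedge^2\Z^n$ really produces the congruence $B^TAB$ (rather than $BAB^T$, or $B^TAB$ with a transposed index convention), and that the sign bookkeeping in the matching expansion of $\mathrm{Pf}$ stays consistent throughout. Once this identity is in hand, the reduction to the symplectic normal form and the Zariski-density argument are routine.
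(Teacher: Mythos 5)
Your proof is correct, but there is nothing in the paper to compare it against step by step: the paper does not prove Lemma \ref{le3.3} at all, it quotes it as a classical result with a citation to Prop.~2.2 of Krattenthaler \cite{K04}, and that cited proposition is precisely the Pfaffian identity $|A|=\mathrm{Pf}(A)^2$ that you establish. So your argument is a self-contained replacement for the citation rather than an alternative to a proof in the paper. All three of your steps are sound: the exterior-algebra definition $\omega_A^{\wedge m}=m!\,\mathrm{Pf}(A)\,e_1\wedge\cdots\wedge e_n$ does yield a polynomial in the $a_{jk}$ with coefficients $\pm1$ (the $m!$ cancels against the $m!$ orderings of each perfect matching), hence an integer value for integer $A$; the transformation law $\mathrm{Pf}(B^TAB)=|B|\,\mathrm{Pf}(A)$ follows as you say, and the index-convention worry you raise is settled by the computation $(B^TAB)_{jk}=\sum_{s<t}a_{st}(b_{sj}b_{tk}-b_{tj}b_{sk})$, which is exactly the coefficient of $e_j\wedge e_k$ in $\psi_*\omega_A$ when $\psi(e_s)=\sum_j b_{sj}e_j$; and the Zariski-density step is legitimate because the complex skew-symmetric matrices of order $2m$ form an affine space (hence irreducible), whose nonsingular locus is nonempty --- witness the standard symplectic matrix $J$ itself, which you should point to explicitly instead of the slightly circular remark that the determinant ``does not vanish identically there'' --- and therefore dense, so the polynomial identity propagates to all skew-symmetric matrices, in particular integer ones. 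Finally, your route buys something concrete in the context of this paper: the Pfaffian provides an explicit integer square root, so in the proof of Theorem \ref{conj}(ii) the constant $c_p=\sqrt{|A_2|}$ acquires a closed form, $c_p=|\mathrm{Pf}(A_2)|$, a signed sum over perfect matchings of the index set $\{2,\ldots,(p-1)/2\}$, whereas the citation only guarantees its existence.
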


\noindent{\tt Proof of Theorem \ref{conj}(ii)}. Recall that
$$A_0=\l[\l(\f{j^2-k^2}p\r)\r]_{0\ls j,k\ls(p-1)/2}
\ \t{and}\ \adj(A_0)=[A_{kj}]_{0\ls j,k\ls(p-1)/2},$$
where $A_{kj}$ is the cofactor of the $(k,j)$-entry in the matrix $A_0$.
As $(\f{-1}p)=-1$ and $(p+1)/2\eq0\pmod2$, it is easy to verify that
$$A_0^T=-A_0\ \ \t{and}\ \ (\adj(A_0))^T=-\adj(A_0).$$
Since
$$\u_0^T\adj(A_0)\u_0=(\u_0^T\adj(A_0)\u_0)^{T}=\u_0^T(\adj(A_0))^T\u_0=-\u_0^T\adj(A_0)\u_0,$$
using Lemma \ref{le6} we deduce that
$$D_p^{(0)}(w)=w\u_0^T\adj(A_0)\u_0=0.$$

It remains to handle the case $\da=1$. When $p=3$, \eqref{Dp1} holds trivially with $c_3=1$.

Now we assume  $p>3$. By Lemma \ref{le3.2}, $|A_1|=0$ but $|A_2|\neq0.$ So
$$\rank(A_1)=\frac{p-3}{2}$$ and
$$\rank(\ker(A_1))=\rank(\adj(A_1))=1,$$
where $\ker(A_1)$ is the set
$$\{[x_1,x_2,\ldots,x_{(p-1)/2}]^T:\ x_i\in\R\ \t{and}\ A_1[x_1,x_2,\ldots,x_{(p-1)/2}]^T=[0,0,\ldots,0]^T\}.$$
As $$A_1[1,1,\cdots,1]^T=[0,0,\cdots,0]^T$$
by Lemma \ref{le3.2}(i), we have
$$\ker(A_1)=\{\lambda[1,1,\cdots,1]^T :\ \lambda\in\R\}.$$
Note that $A_1^T=-A_1$ and $(\adj(A_1))^T=\adj(A_1)$. Also,
$$A_1\adj(A_1)=|A_1|I_{(p-1)/2}=\mathbf{0},$$
where $I_n$ to denote the identity matrix of order $n$.
Thus, for any $j,k\in\{1,\ldots,(p-1)/2\}$ we have
$$|A_2|=\adj(A_1)_{1,1}=\adj(A_1)_{j,1}=\adj(A_1)_{1,j}=\adj(A_1)_{k,j}=\adj(A_1)_{j,k},$$
 where $\adj(A_1)_{j,k}$ denotes the $(j,k)$-entry of the matrix $\adj(A_1)$. Consequently
\begin{equation}\label{3.1}
\adj(A_1)=|A_2|\mathbf{1}
\end{equation}
where $\mathbf{1}$ denotes the matrix of order $(p-1)/2$ whose entries are all $1$.

Combining (\ref{3.1}) with Lemma \ref{le6} and Lemma \ref{le3.3} , we obtain
\begin{align*}\label{3.2}
D_p^{(1)}(w)&=w\u_1^T\adj(A_1)\u_1
=w\u_1^T|A_2|\mathbf{1}\u_1\\
&=w|A_2|\(\sum_{j=1}^{(p-1)/2}\l(\f{j}p \r)\)^2
=w\(c_p\sum_{j=1}^{(p-1)/2}\l(\f{j}p \r)\)^2.
\end{align*}
where $c_p=\sqrt{|A_2|}$ is a nonnegative integer by Lemma \ref{le3.3}.
As $|A_2|$ is odd, so is $c_p$.

In view of the above, we have completed the proof of Theorem \ref{conj}(ii).
\qed

\medskip

\end{document}